\documentclass[12pt]{amsart}
\usepackage{amssymb,latexsym,amsmath,amsfonts,amsthm,amscd,graphicx,url,color,hyperref}
\usepackage{stmaryrd}
\usepackage[shortlabels]{enumitem}

\setlength{\parindent}{.4 in}
\setlength{\textwidth}{6 in}
\setlength{\evensidemargin}{0 in}
\setlength{\oddsidemargin}{0 in}
\setlength{\textheight}{8 in}
\setlength{\parskip}{.1 in}

\theoremstyle{plain}

\newtheorem{thm}{Theorem}
\newtheorem{prop}[thm]{Proposition}

\newtheorem{lemma}[thm]{Lemma}

\theoremstyle{definition}
\newtheorem{definition}[thm]{Definition}
\newtheorem{remark}[thm]{Remark}

\newtheorem{example}[thm]{Example}

\newtheorem{que}[thm]{Question}

\newcommand{\CS}{\mathcal{S}}

\bibliographystyle{amsplain}

\title[Boundedness of type of almost Gorenstein monomial curves in $\mathbb{A}^5$]{On the boundedness of the type of an almost Gorenstein monomial curve in $\mathbb{A}^5$}
\author{Alessio Moscariello}

\subjclass[2020]{13H10, 13F99, 20M14, 20M25}

\keywords{almost Gorenstein local ring, Cohen-Macaulay type, almost symmetric numerical semigroups}

\address[Alessio Moscariello]{Dipartimento di Matematica e Informatica, \ Universit\`a di Catania, \  Viale Andrea Doria 6, 
	95125 Catania,Italy}

\email{alemoscariello@hotmail.it}

\begin{document}

\maketitle

\begin{abstract}
We prove that the Cohen-Macaulay type of an almost Gorenstein monomial curve $\mathcal{C} \subseteq \mathbb{A}^5$ is bounded.
\end{abstract}
\section{Introduction}

Almost Gorenstein rings are a class of Cohen-Macaulay rings, introduced by Barucci and Fröberg in 1997 (cf. \cite{BF}), which are close to Gorenstein rings. This class of rings was originally defined in the context of analytically unramified rings of dimension one, however this definition has been later generalized in the context of one-dimensional rings (cf. \cite{GM}), and then to the higher dimension case (cf. \cite{GTT}). 

It is well-known that a local ring is Gorenstein if and only if it is Cohen-Macaulay and its \emph{Cohen-Macaulay type} is equal to one; in a sense, almost Gorenstein rings are meant to extend Gorenstein rings to arbitrary type. The properties of this class of rings have been studied by several authors in the last two decades (cf. \cite{BDS}, \cite{DS}, \cite{E}, \cite{GMT}, \cite{HW}, \cite{NNW}, \cite{SW} for example); in particular, curious patterns started to appear in works dealing with the Cohen-Macaulay type of almost Gorenstein rings. In the original context of one-dimensional analytically unramified rings, including local rings associated to monomial curves, it is well-known that if the embedding dimension is at most $3$ then the Cohen-Macaulay type is either $1$ (and thus the ring is Gorenstein) or $2$ (cf. \cite{FGH}), while if the embedding dimension is at least $4$, there is no upper bound for the Cohen-Macaulay type (cf. \cite[Example p. 75]{FGH}). On the other hand, if we restrict ourselves to almost Gorenstein rings of embedding dimension $4$, Numata (cf. \cite{Nu}) asked if the Cohen-Macaulay type is at most $3$, which turned out to be true (cf. \cite{Mo}). Families of almost Gorenstein rings with arbitrarily large Cohen-Macaulay type $t$ are present in the literature (cf. \cite{GO}, \cite{GMT}); however, for these families of rings, the embedding dimension $e$ is at least $\frac{t}{2}$ (actually, in this context there is no example in the literature of an almost Gorenstein ring satisfying $t > 2e$). Therefore, contrary to the general context, for almost Gorenstein rings and curves the following question (cf. \cite{Nu}, \cite{Mo}) naturally arises.

\begin{que}\label{QuestionType}
	Let $\mathcal{C}$ be an almost Gorenstein monomial curve. 
	Is the Cohen-Macaulay type $t(\mathcal{C})$ bounded by a function of the embedding dimension $e$?
\end{que}

Since the cases $e \le 4$ have already been solved (cf. \cite{FGH}, \cite{Mo}), we investigate the case $e=5$. Interestingly, we obtain the following result.

\begin{thm}\label{main}
	The Cohen-Macaulay type of an almost Gorenstein monomial curve $\mathcal{C} \subseteq \mathbb{A}^5$ is bounded.
\end{thm}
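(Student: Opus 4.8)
The plan is to translate the statement into combinatorics on almost symmetric numerical semigroups and then to exploit that only five generators are available. Up to isomorphism $\mathcal{C}=\mathrm{Spec}\,k[[t^{n_1},\dots,t^{n_5}]]$ for a numerical semigroup $S=\langle n_1,\dots,n_5\rangle$ minimally generated by the $n_i$; by \cite{BF} the curve is almost Gorenstein precisely when $S$ is \emph{almost symmetric}, meaning that $f\in\mathrm{PF}(S)\setminus\{F\}$ implies $F-f\in\mathrm{PF}(S)$, where $F=F(S)$ is the Frobenius number and $\mathrm{PF}(S)$ the set of pseudo-Frobenius numbers, and moreover $t(\mathcal{C})=|\mathrm{PF}(S)|$. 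So the content is a bound on $|\mathrm{PF}(S)|=:t$, uniform over all almost symmetric $S$ of embedding dimension $5$; since the equational methods that settle $e\le 4$ have no evident analogue here, I would work on the ``dual'' side, directly with $\mathrm{PF}(S)$. Fixing a constant $C$ to be determined, I would suppose $t>C$ and decompose $\mathrm{PF}(S)\setminus\{F\}$ into the orbits $\{f,F-f\}$ of the involution $f\mapsto F-f$, each of size $2$ except for at most one fixed point $f=F/2$.

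Following the treatment of the case $e=4$ (cf.\ \cite{Mo}), for each $f\in\mathrm{PF}(S)$ I would choose a row-factorization matrix $\mathrm{RF}(f)=(a_{ij})\in\mathbb{N}^{5\times 5}$, so that $a_{ii}=0$ and $\sum_j a_{ij}n_j=f+n_i$ for every $i$; this exists because $f\notin S$ forces every factorization of $f+n_i$ to avoid $n_i$, and each such row is nonzero. To $f$ I attach the \emph{support digraph} $D(f)=\{(i,j):a_{ij}>0\}$ on the vertex set $\{1,\dots,5\}$, which is loopless and has out-degree at least one at every vertex. The crucial elementary fact, which I would isolate as a lemma, is a \emph{conflict relation}: if $f,f'\in\mathrm{PF}(S)$ satisfy $f+f'=F$, then for $i\ne j$ at most one of $\mathrm{RF}(f)_{ij}$ and $\mathrm{RF}(f')_{ji}$ is positive, for if both were positive then lowering each by one exhibits $f+n_i-n_j\in S$ and $f'+n_j-n_i\in S$, whose sum $f+f'=F$ would lie in $S$, which is absurd. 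Thus $D(f)$ is edge-disjoint from the transpose of $D(f')$, and the digraph at the fixed point $F/2$, if present, is antisymmetric. I would also use that $\mathrm{PF}(S)+n_1$ is exactly the set of $\le_S$-maximal elements of the Apéry set $\mathrm{Ap}(S,n_1)$, together with Nari's symmetry of that Apéry set in the almost symmetric case.

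There being only finitely many support digraphs on $\{1,\dots,5\}$, once $C$ is large enough many orbits realize the same ordered pair $(D(f),D(F-f))$. Inside such a family, for each $i$ the $i$-th rows of the chosen matrices differ only in entries supported on $\{j:(i,j)\in D(f)\}$, so any difference $f-g$ of two pseudo-Frobenius numbers from the family is an integer combination of the corresponding $n_j$, simultaneously for every $i$, and, through the complementary matrices, of the $n_i$ with $(i,j)\in D(F-f)$ as well. Pitting these representations against the conflict relation, the nonnegativity of the entries, and the vanishing diagonal, I would force such a difference $f-g$ either to lie in $S$ --- which is impossible, since $f+n_1$ and $g+n_1$ are both $\le_S$-maximal in $\mathrm{Ap}(S,n_1)$, so that $f-g\in S$ with $f\ne g$ contradicts maximality --- or to vanish, contradicting distinctness; this would cap the number of pseudo-Frobenius numbers sharing a given support pattern, and hence $t$. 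I expect the genuine obstacle to be precisely that the conflict relation ties $f$ only to its own complement $F-f$ and constrains two \emph{distinct} orbits only weakly, so that excluding long ``alternating'' chains of mutually incompatible supports will require finer structural input --- the geometry of $\mathrm{Ap}(S,n_1)$ inside the four-generated monoid $\langle n_2,n_3,n_4,n_5\rangle$ and the almost symmetry of that Apéry set --- and keeping the resulting case analysis over $5\times 5$ support digraphs finite.
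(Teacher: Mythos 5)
Your reduction to almost symmetric numerical semigroups, your conflict relation (which is exactly Proposition \ref{prop1} of the paper, quoted from \cite{Mo}), and your plan to pigeonhole pseudo-Frobenius numbers by their $0$-patterns all match the paper's strategy. But the proof has a genuine gap precisely where you flag it yourself: you never establish that two (or three) pseudo-Frobenius numbers sharing a support pattern must coincide, and the observation you offer in that direction does not suffice. Knowing that $f-g$ is an integer combination of the generators indexed by a common support tells you nothing, since the coefficients can have mixed signs; you cannot conclude $f-g\in S$ or $f=g$ from it. The paper's actual mechanism (Lemma \ref{key}) is different and more delicate: given two rows of the RF-matrix of a bad $f$ whose positive supports overlap in \emph{exactly one} index $j$, one substitutes one row equation of $f'$ into a row equation of $f$ to manufacture a new nonnegative row-factorization of $f$ with potentially three positive entries; since a bad pseudo-Frobenius number admits no row with three zeroes (else it would be of the form $kg_j-g_i$, hence good), the count of positive entries per row is pinned at exactly two, forcing one coefficient to vanish, whence $f-f'$ is a multiple of a \emph{single} generator and maximality of $PF(\CS)$ under $\le_\CS$ gives $f=f'$. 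Your sketch contains no substitute for this substitution-and-overflow argument.

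Two further ingredients you omit are load-bearing. First, the paper must split off the ``good'' pseudo-Frobenius numbers (those where $f$ or $F-f$ equals some $kg_j-g_i$) and bound them separately by $2|\Lambda|=40$ via the uniqueness of $\lambda_{ij}$; only for the remaining ``bad'' ones does every row and column of the RF-matrix have exactly two positive entries, which is what makes the coefficient-counting in Lemma \ref{key} close. Second, the existence of two rows whose positive supports meet in exactly one index is not automatic: it is a parity argument (Lemma \ref{rows}) that works because $5$ is odd, and it is the reason the whole method is specific to $e=5$. Without these, the case analysis you defer to ``finer structural input'' is exactly the missing proof, and the Ap\'ery-set/Nari symmetry tools you propose to bring in are not what the paper uses to supply it.
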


We provide an explicit bound, that is, $t(\mathcal{C}) \le 473$. However, this bound does not seem optimal; computational evidence, included in the last part of this work, suggests that the bound can be largely improved. Finally, we give an example of almost Gorenstein ring with embedding dimension $6$ and type $14$, satisfying $t > 2e$; this suggests that if $e =6$ it might be harder to bound the Cohen-Macaulay type $t$. Our approach relies on the correspondence between numerical semigroups and monomial curves (\cite{BDF}); we study the class of numerical semigroups associated to almost Gorenstein monomial curves, which are called \emph{almost symmetric}.

\section{Numerical semigroups}

A \emph{numerical semigroup} is a submonoid $\CS$ of $(\mathbb{N},+)$ such that the set $\mathbb{N}\setminus \CS$ is finite. Every numerical semigroup is finitely generated and admits a unique minimal system of generators $\{g_1,\ldots,g_e\}$, such that $\gcd(g_1,\ldots,g_e)=1$. The integer $e=e(\CS)$ is called the \emph{embedding dimension} of $\CS$. The largest natural number not belonging to $\CS$ is called the \emph{Frobenius number} of $\CS$, denoted by $F(\CS)=\max \mathbb{N}\setminus \CS$. Elements of $\CS$ which are smaller than the Frobenius number are called \emph{small elements}, and the set of small elements of a numerical semigroup is denoted by $N(\CS)$, while its cardinality is usually denoted by $n(\CS)$. On the other hand, natural numbers not belonging to $\CS$ are called \emph{gaps} of the numerical semigroup $\CS$, while the set of gaps is denoted by $G(\CS)=\mathbb{N}\setminus \CS$.

We say that an integer $f$ is a \emph{pseudo-Frobenius number} of a numerical semigroup $\CS=\langle g_1,\ldots,g_e \rangle$ if $f \not \in \CS$ and $f+g_i \in \CS$ for every $i=1,\ldots,e$. We denote the set of pseudo-Frobenius numbers by $PF(\CS)$, and define the \emph{type} of $\CS$ as the cardinality of $PF(\CS)$, denoted by $t(\CS)$. Clearly $F(\CS) = \max PF(\CS)$. Given a numerical semigroup $S$, we can define a partial order relation $\le_\CS$ on $\mathbb{Z}$ in this way: $x \le_\CS y$ if $y-x \in \CS$. By definition, $PF(\CS)$ is the set of maximal elements of $\mathbb{Z}\setminus \CS$ with respect to $\le_\CS$.

\begin{remark}\label{largetype}
	\begin{enumerate}
		\item In \cite{FGH} it is proved that the type of a numerical semigroup with embedding dimension $3$ is at most $2$.
		\item In \cite[Example p.75]{FGH} it is shown that there exist families of numerical semigroups with embedding dimension $4$ and arbitrarily large type. Namely, if $n \ge 2$ and $r \ge 3n+2$ are given, $s=r(3n+2)+3$ and $\mathcal{S}=\langle s, s+3, s+3n+1, s+3n+2 \rangle$, then $t(\mathcal{S})= 3n+2$.
	\end{enumerate}
\end{remark}

A numerical semigroup $\CS$ is said to be \emph{symmetric} if for every $x \not \in \CS$, $F(\CS)-x \in \CS$; it is well-known that symmetric numerical semigroups are exactly those with type equal to $1$. Further, we say that $\CS$ is \emph{almost symmetric} if for every $x \not \in \CS$ we have either $F(\CS)-x \in \CS$ or $\{x, F(\CS)-x\} \subseteq PF(\CS)$. Almost symmetric numerical semigroups can have arbitrarily large type (cf. \cite{GO}); however, contrary to the general case (see Remark \ref{largetype}), it is known that almost symmetric numerical semigroups with embedding dimension $4$ have type at most $3$ (cf. \cite{Mo}). It is not clear if, once the embedding dimension $e$ of an almost symmetric numerical semigroup is fixed, there is some sort of restriction which bounds $t$ in function of $e$ (see Question \ref{QuestionType}). 

The monograph \cite{RG1} is a good reference on numerical semigroups. As one might guess from the terminology, there is a strict relation between numerical semigroups and commutative rings. In fact, the value semigroups associated to analytically unramified one dimensional local domains are actually numerical semigroups, and there is a correspondence between the invariants of these mathematical objects (cf. \cite{BDF}). For our purposes, it is worth remembering that the Cohen-Macaulay type $t(\mathcal{C})$ of an almost Gorenstein monomial curve coincides with the type $t(\mathcal{\CS})$ of the associated numerical semigroup, which is almost symmetric.

\section{Type of almost symmetric numerical semigroups}
Let $\CS= \langle g_1,\ldots,g_e \rangle$ be a numerical semigroup.

Let $f \in \mathbb{Z}$. We say that a square matrix $A=(a_{ij})$ of order $e$ is a RF-matrix (short for \emph{row-factorization matrix}) for $f$ if, for every $i=1,\ldots,e$, $f-a_{ii}g_i \in S$, $f- (a_{ii}+1)g_i \not \in \CS$, $a_{ij} \ge 0$ for every $j \neq i$ and $\displaystyle f=\sum_{j=1}^{e} a_{ij}g_j$.

\begin{remark}
	It is straightforward to check that $f \in \CS$ if and only if there is a RF-matrix for $f$ such that $a_{ii} \ge 0$ for every $i=1,\ldots,e$, while $f \in PF(\CS)$ if and only if there is a RF-matrix for $f$ such that $a_{ii} = -1$ for every $i=1,\ldots,e$.
\end{remark}

\begin{example}
Consider the numerical semigroup $\CS= \langle 5, 12, 13 \rangle$, and let $f= 19$. Since we can write $19 = - 5 + 2 \cdot 12 = 5 - 12 + 2 \cdot 13 = 4 \cdot 5 + 12 - 13$, we have that $$A = \begin{pmatrix}
	-1 & 2 & 0\\
	1 & -1 & 2 \\
	4 & 1 & -1
\end{pmatrix} $$

is a RF-matrix for $f$, and furthermore we can see that $f \in PF(\CS)$. 
\end{example}

\begin{prop}\label{prop1}\protect{\cite[Proposition 4]{Mo}}
	Let $f, F(\CS)-f \in PF(\CS)$, $A=(a_{ij})$ be a RF-matrix for  $f$ and $B=(b_{ij})$ a RF-matrix for $F(\CS)-f$. Then for every $i \neq j$ we have $a_{ij}b_{ji}=0$.
	Therefore, there are at least $e(e-1)$ entries of $A \cup B$ equal to zero.
\end{prop}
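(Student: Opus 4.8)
The plan is to argue by contradiction, exploiting the row-factorization structure together with the single fact that $F(\CS) \notin \CS$. First I would invoke the Remark preceding the statement: since $f$ and $F(\CS)-f$ both lie in $PF(\CS)$, the RF-matrices $A$ and $B$ satisfy $a_{ii}=b_{ii}=-1$ for all $i$, and consequently every off-diagonal entry of either matrix is a nonnegative integer.

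Next, suppose toward a contradiction that $a_{ij}b_{ji}\neq 0$ for some pair $i\neq j$, i.e.\ $a_{ij}\ge 1$ and $b_{ji}\ge 1$. I would then isolate the $i$-th row of $A$ and the $j$-th row of $B$, which, after splitting off the diagonal terms, read
\[
f = -g_i + a_{ij}g_j + \sum_{k\neq i,j} a_{ik}g_k, \qquad F(\CS)-f = b_{ji}g_i - g_j + \sum_{k\neq i,j} b_{jk}g_k,
\]
and add them. Because $i\neq j$, the two diagonal contributions land in different coordinates, and we obtain
\[
F(\CS) = (b_{ji}-1)g_i + (a_{ij}-1)g_j + \sum_{k\neq i,j}(a_{ik}+b_{jk})g_k .
\]
All coefficients on the right-hand side are nonnegative: the first two since $a_{ij},b_{ji}\ge 1$, and the remaining ones since off-diagonal entries of RF-matrices are nonnegative. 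Hence $F(\CS)$ would be a nonnegative integer combination of $g_1,\ldots,g_e$, so $F(\CS)\in\CS$, contradicting the definition of the Frobenius number. This establishes $a_{ij}b_{ji}=0$ for every $i\neq j$.

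For the counting statement I would range over the $\binom{e}{2}$ unordered pairs $\{i,j\}$. For each such pair, the two constraints $a_{ij}b_{ji}=0$ and $a_{ji}b_{ij}=0$ each force at least one of the two entries involved to vanish, and the four entries $a_{ij},a_{ji},b_{ij},b_{ji}$ are pairwise distinct; thus each unordered pair contributes at least two zero entries to $A\cup B$. Summing over all pairs yields at least $2\binom{e}{2}=e(e-1)$ zero entries, as claimed.

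I do not anticipate a genuine obstacle: the argument is short and self-contained. The only point demanding care is the bookkeeping in the final step — verifying that the $\binom{e}{2}$ pairs of constraints involve pairwise disjoint sets of matrix entries, so that the counted zeros are not repeated, and that the diagonal entries (all equal to $-1$, hence nonzero) play no role in the count.
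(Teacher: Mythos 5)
Your proof is correct. The paper does not reprove this statement (it cites \cite[Proposition 4]{Mo}), and your argument is precisely the standard one from that reference: summing the $i$-th row of $A$ with the $j$-th row of $B$ would express $F(\CS)$ as a nonnegative combination of the generators if both $a_{ij}$ and $b_{ji}$ were positive, contradicting $F(\CS)\notin\CS$; the count then follows since the $e(e-1)$ constraints involve pairwise disjoint pairs of off-diagonal entries.
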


For every $i,j \in \{1,\ldots,e\}$, $i \neq j$, define $\lambda_{ij}=\max \{ k \in \mathbb{N} \ | \ kg_j-g_i \not \in S\} \ge 2$ and $\Lambda_{ij} = \lambda_{ij} g_j-g_i$, and consider the multiset $\Lambda := \{ \Lambda_{ij} \ | i \neq j\}$. Notice that $|\Lambda| = e(e-1)$.

\begin{remark}\label{lambda}
	For every $i,j \in \{1,\ldots,e\}$, $i \neq j$, if $kg_j-g_i$ is contained in $PF(\CS)$,
	then $k$ must be $\lambda_{ij}$, which implies that such an element is unique and belongs to $\Lambda$. 
\end{remark}

\begin{remark}
	As a direct consequence of Proposition \ref{prop1} we can see that, if $\CS$ is an almost-symmetric numerical semigroup with embedding dimension $4$ and $A,B$ are RF-matrices for two pseudo-Frobenius numbers $f,F(\CS)-f$, there	are at least $4\cdot 3 = 12$ entries of $A \cup B$ equal to zero, therefore there is at least one row of $A$ or $B$ with exactly one positive entry. This implies that either $f$ or $F(\CS)-f$ (or both) has to be equal to an element of $\Lambda$, meaning that there are a bounded number of couples $\{f,F(\CS)-f\}$ of elements of $PF(\CS)$. This simple argument shows that the type of an almost symmetric numerical semigroup with embedding dimension $4$ is bounded (cf. \cite{Mo}).
\end{remark}

Let $\CS=\langle g_1,\ldots,g_5 \rangle$ be an almost-symmetric numerical semigroup with embedding dimension five.

Since $\CS$ is almost symmetric, all its pseudo-Frobenius numbers $f$ besides $F(\CS)$ are such that $F(\CS)-f \in PF(\CS)$. We say that a pseudo-Frobenius number $f \in PF(\CS) \setminus \{F(\CS)\}$ is \emph{good}  if either $f$ or $F(\CS)-f$ is of the form $kg_j-g_i$ for some $i \neq j$ and some positive integer $k$, otherwise we say that $f$ is \emph{bad}. We denote by $PF_g(\CS)$ the set of good pseudo-Frobenius numbers, and by $PF_b(\CS)$ the set of bad pseudo-Frobenius numbers.
\begin{remark}\label{typerem}
	By definition $f \in PF_g(\CS)$ if and only if $F(\CS)-f \in PF_g(\CS)$, and $t(\CS)=|PF_g(\CS)|+|PF_b(\CS)|+1$. 
	
\end{remark}

\begin{prop}\label{good}
	Let $\CS$ be an almost-symmetric numerical semigroup with embedding dimension five. Then $|PF_g(\CS)| \le 40$.
\end{prop}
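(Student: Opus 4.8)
The plan is to count good pseudo-Frobenius numbers by exploiting the structure of the multiset $\Lambda$. Recall that a pseudo-Frobenius number $f \in PF(\CS) \setminus \{F(\CS)\}$ is good precisely when $f$ or $F(\CS)-f$ equals some $kg_j - g_i$; by Remark \ref{lambda}, if such an element lies in $PF(\CS)$, then necessarily $k = \lambda_{ij}$, so the element is exactly $\Lambda_{ij} \in \Lambda$. Since $\Lambda$ has $e(e-1) = 20$ elements when $e = 5$, the set of values in $\Lambda \cap PF(\CS)$ has size at most $20$.

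First I would set up the counting. For each good pseudo-Frobenius number $f$, at least one of $f$ or $F(\CS)-f$ lies in $\Lambda \cap PF(\CS)$; call such an element a \emph{representative}. Each element of $\Lambda \cap PF(\CS)$ can serve as representative for at most one couple $\{f, F(\CS)-f\}$ (since $f$ determines $F(\CS)-f$), and a couple contributes two elements to $PF_g(\CS)$ unless $f = F(\CS)-f$. Thus $|PF_g(\CS)| \le 2 |\Lambda \cap PF(\CS)| \le 2 \cdot 20 = 40$. The only subtlety is making sure the bookkeeping is correct: a single couple might have both $f$ and $F(\CS)-f$ in $\Lambda$, in which case it uses up two slots of $\Lambda \cap PF(\CS)$ but still only contributes two elements, so the bound $|PF_g(\CS)| \le 2|\Lambda \cap PF(\CS)|$ remains valid, and in the worst case (every element of $\Lambda \cap PF(\CS)$ is the representative of a distinct couple) we get exactly the factor of $2$.

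The main point to verify carefully is that $|\Lambda| = e(e-1) = 20$: this is precisely the count stated in the excerpt just before Remark \ref{lambda}, since the pairs $(i,j)$ with $i \neq j$ in $\{1,\ldots,5\}$ number $5 \cdot 4 = 20$, and for each such pair $\Lambda_{ij} = \lambda_{ij} g_j - g_i$ is well-defined (as $\lambda_{ij} \ge 2$ exists because $g_i, g_j$ are distinct minimal generators, so $g_i$ is not a multiple of $g_j$, and arbitrarily large multiples of $g_j$ eventually fall in $\CS$). Hence $\Lambda \cap PF(\CS)$, being a subset of the set of distinct values appearing in $\Lambda$, has at most $20$ elements.

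I do not expect a real obstacle here; this proposition is the ``easy half'' of the overall argument, essentially a clean generalization of the embedding-dimension-$4$ observation in the remark preceding it, where the analogous count gave a bound via $\Lambda$ directly. The genuinely hard work will be bounding $|PF_b(\CS)|$ — the bad pseudo-Frobenius numbers — for which Proposition \ref{prop1} only guarantees $e(e-1) = 20$ zero entries among the $2e^2 = 50$ entries of $A \cup B$, leaving enough positive entries in both matrices that one cannot immediately force $f$ or $F(\CS)-f$ into $\Lambda$. That case analysis, not the present proposition, is where the bulk of the paper's effort (and the large constant $473$) will come from.
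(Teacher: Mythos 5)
Your argument is correct and is essentially identical to the paper's proof: partition $PF_g(\CS)$ into couples $\{f, F(\CS)-f\}$, note via Remark \ref{lambda} that each couple is associated to at least one of the $e(e-1)=20$ elements of $\Lambda$, and conclude $|PF_g(\CS)| \le 2|\Lambda| = 40$. The bookkeeping subtleties you flag (the self-paired element $\tfrac{F(\CS)}{2}$, couples with both members in $\Lambda$) are handled the same way in the paper, so there is nothing to add.
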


\begin{proof}
	In light of Remark \ref{typerem} we can partition the set $PF_g(\CS)$ in couples $\{f, F(\CS)-f\}$ (eventually pairing the element $\frac{F(\CS)}{2}$ with itself). On the other hand, by Remark \ref{lambda} every couple $\{f,F(\CS)-f\}$ is associated to (at least) one element of $\Lambda$, thus $|PF_g(\CS)| \le 2 |\Lambda| = 40$.
\end{proof}

Let $f \in PF_b(\CS)$,  $A=(a_{ij})$ be a RF-matrix for $f$, and $B=(b_{ij})$ a RF-matrix for $F(\CS)-f$. Notice that if at least one row of either $A$ or $B$ has at least $3$ entries equal to zero, then by definition either $f$ or $F(\CS)-f$ would be of the form $K g_j-g_i$ for some $i,j$, contradicting $f \in PF_b(\CS)$. But since by Proposition \ref{prop1} there are at least $20$ zeroes among the entries of $A$ and $B$, we deduce that $A$ and $B$ must satisfy the following properties:
\begin{enumerate}[(a)]
	\item  There are exactly $20$ entries of $A \cup B$ equal to zero, and for every $i \neq j$ exactly one of $a_{ij}$ and $b_{ji}$ is zero.
	\item Each row and column of $A$ and $B$ has exactly two positive entries and exactly two zeroes.
\end{enumerate}

\begin{example}
	Let $\CS= \langle 64, 67, 91, 138, 150 \rangle$, $PF(\CS)=\{209,327,445,654\}$. It is simple to check that $209$ is a good pseudo-Frobenius number, since $209=3 \cdot 91 - 64$. On the other hand, $f=327$ is a bad pseudo-Frobenius number, and its RF-matrix is $$A=\begin{pmatrix}
		-1 & 0 & 1 & 0 & 2\\
		4 & -1 & 0 & 1 & 0 \\
		0 & 4 & -1 & 0 & 2 \\
		3 & 0 & 3 & -1 & 0 \\
		0 & 3 & 0 & 2 & -1
	\end{pmatrix}.$$
Since $F(\CS)=654$, we have $f=F(\CS)-f$ and $B=A$ in the previous argument; it is easy to see that the couple of RF-matrices $A$ and $B=A$ satisfy both properties (a) and (b).
\end{example}

\begin{definition}
	Let $M=(m_{ij}),M'=(m'_{ij})$ be two square matrices of order $n$. We say that $M$ and $M'$ have the same \textbf{$0$-configuration} if $m_{ij}=0$ if and only if $m'_{ij}=0$. 
\end{definition}

In our context, if $A=(a_{ij})$ and $A'=(a'_{ij})$ are two RF-matrices for the same bad pseudo-Frobenius number $f \in PF_b(\CS)$ and $B=(b_{ij})$ is a RF-matrix for $F(\CS)-f$, then by (a) in both couples $\{a_{ij},b_{ji}\}$ and $\{a'_{ij},b_{ji}\}$ there is exactly one zero: therefore we have that $a_{ij}=0$ if and only if $a'_{ij}=0$. Then, all RF-matrices of a bad pseudo-Frobenius $f$ have the same $0$-configuration. 

\begin{lemma}\label{rows}
	Let $f \in PF_b(\CS)$ and $A=(a_{ij})$ be a RF-matrix for $f$. Then there are two row vectors $R_1$ and $R_2$ of $A$ such that, for exactly one index $j$, the $j$-th entries of $R_1$ and $R_2$ are both positive.
\end{lemma}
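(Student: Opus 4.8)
We have a bad pseudo-Frobenius number $f$ with RF-matrix $A = (a_{ij})$ of order $5$. By properties (a) and (b) established just above, each row of $A$ has exactly two positive entries (off the diagonal, which is $-1$) and exactly two zeroes. So the ``support pattern'' of $A$ is recorded by choosing, for each row $i \in \{1,\ldots,5\}$, a two-element subset $P_i \subseteq \{1,\ldots,5\} \setminus \{i\}$ telling us which off-diagonal entries are positive. The claim to be proved is that among the five sets $P_1,\ldots,P_5$, there exist two of them, say $P_k$ and $P_\ell$, whose intersection is a single element; equivalently, $|P_k \cap P_\ell| = 1$ for some $k \neq \ell$.

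**Reduction to a counting statement.** I would argue by contradiction: suppose no two rows share exactly one positive column, i.e.\ for all $k \neq \ell$ we have $|P_k \cap P_\ell| \in \{0, 2\}$. Since each $P_i$ has size $2$, $|P_k \cap P_\ell| = 2$ means $P_k = P_\ell$, so the condition says: any two of the sets $P_1,\ldots,P_5$ are either equal or disjoint. Thus the multiset $\{P_1,\ldots,P_5\}$ is partitioned into classes of mutually equal sets, and sets from different classes are disjoint as subsets of $\{1,\ldots,5\}$. A family of pairwise disjoint $2$-element subsets of a $5$-element set contains at most two members; hence there are at most two distinct sets among $P_1,\ldots,P_5$, and they are disjoint. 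I would then combine this with the "dual" constraint that must also hold. Recall from property (a) that $a_{ij} = 0 \iff b_{ji} \neq 0$, so the positive-entry pattern of the columns of $A$ is governed by the matrix $B$ for $F(\CS)-f$, which is itself a RF-matrix for another bad pseudo-Frobenius number (namely $F(\CS)-f$, which lies in $PF_b(\CS)$ by Remark \ref{typerem}), and so $B$ also satisfies (b): each column of $A$ has exactly two positive entries as well. So the $5 \times 5$ zero-one "positivity matrix" of $A$ has exactly two ones in every row and every column — it is the incidence matrix of a $2$-regular bipartite-type configuration, i.e.\ a disjoint union of cycles covering all five rows/columns.

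**Finishing.** The positivity pattern being $2$-regular in rows and columns with the additional diagonal constraint ($a_{ii} = -1$, so the diagonal entries are never among the chosen positive positions: $i \notin P_i$) means the associated bipartite multigraph on $\{1,\ldots,5\} \sqcup \{1,\ldots,5\}$ with an edge $i \to j$ whenever $a_{ij} > 0$ is $2$-regular and has no loops. If the rows split into only two distinct, disjoint support sets $P$ and $P'$, say $r$ rows have support $P$ and $5-r$ have support $P'$, then counting ones column-by-column: each column $j \in P$ receives exactly $r$ ones (one from each row with support $P$) plus possibly more from $P'$-rows if $j \in P'$; but $P \cap P' = \emptyset$ forces column $j \in P$ to receive exactly $r$ ones, so $r = 2$, and similarly $5 - r = 2$, contradicting $r + (5-r) = 5$. (If $r \in \{0, 1\}$ or the columns of $P \cup P'$ fail to cover all five indices one gets an immediate contradiction with $2$-regularity of columns, since $|P \cup P'| \le 4 < 5$.) Hence the assumption fails, and two rows $R_1, R_2$ with $|P_{1} \cap P_{2}| = 1$ must exist.

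**Main obstacle.** The genuinely delicate point is making sure that the diagonal and the interplay between $A$ and $B$ are handled correctly: one must use that $B$ is a RF-matrix for a \emph{bad} pseudo-Frobenius number to invoke property (b) for $B$ (and hence column-regularity for $A$), rather than merely property (a). Once both row- and column-$2$-regularity of the positivity pattern are in hand, the combinatorics of $2$-element subsets of a $5$-set — an odd number — does the rest, essentially because $5$ is not a multiple of $2$. I would streamline the write-up by phrasing everything in terms of the bipartite $2$-regular multigraph and a parity/cardinality count, avoiding case analysis.
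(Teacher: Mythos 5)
Your proof is correct and rests on the same ingredients as the paper's: the row- and column-$2$-regularity of the positivity pattern from properties (a) and (b), combined with the fact that $5$ is odd. The paper organizes the contradiction dually — for each column $j$ it takes the unique pair of rows positive there and shows that if they always shared their second positive index the five columns would pair up perfectly, which is impossible — but this is the same parity/counting argument as your observation that pairwise equal-or-disjoint $2$-element supports can cover at most four of the five columns.
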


\begin{proof}
	The RF-matrix $A$ satisfies the two properties (a) and (b), hence each column of $A$ contains exactly two positive entries and two zeroes. Then, for every $j=1,\ldots,5$, there exists a couple of rows $\{R_{1j},R_{2j}\}$ which $j$-th entries are both positive. However, if the other positive entry of $R_{1j},R_{2j}$ corresponds to the same index $k$, then the two indices $j$ and $k$ would be associated to the same couple $\{R_{1j},R_{2j}\}$, and they are the only two indices associated to these two rows (each row has exactly two positive entries). However, since the number of indices is odd, this cannot happen for all values: therefore there must be one index $j$ such that the two rows $R_{1j},R_{2j}$ have exactly one positive common component, proving our thesis.  
\end{proof}

\begin{lemma}\label{key}
	
	Let $f,f' \in PF_b(\CS)$ be such that 
	\begin{alignat}{2}
		f &= a_{ij}g_j+a_{ik}g_k-g_i &= a_{pj}g_j+a_{pq}g_q-g_p \\ 
		f' &= b_{ij}g_j+b_{ik}g_k-g_i &= b_{pj}g_j+b_{pq}g_q-g_p 
	\end{alignat}
	where the indices $i,j,k,p,q \in \{1,\ldots,5\}$ are such that $i \neq j$, $i \neq k$, $j \neq k$, $p \neq j$, $p \neq q$, $j \neq q$ (i.e. all indices on the same side of an equation are distinct), $q \neq k$, and all coefficients $a_{mn}$ and $b_{mn}$ are positive integers, with $a_{ij} \ge a_{pj}$, $b_{ij} \ge b_{pj}$. 
	
	Then $f=f'$.
\end{lemma}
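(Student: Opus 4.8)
The plan is to show that $f - f' \in \CS$ or $f' - f \in \CS$; this suffices, for $f$ and $f'$ lie in $PF_b(\CS) \subseteq PF(\CS)$, hence are maximal in $\mathbb{Z} \setminus \CS$ with respect to $\le_\CS$, so once $f$ and $f'$ are $\le_\CS$-comparable they must coincide.

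First I would combine the four given identities. Subtracting the two factorizations of $f$ gives $g_i - g_p = \alpha g_j + a_{ik} g_k - a_{pq} g_q$ with $\alpha := a_{ij} - a_{pj} \ge 0$; the two factorizations of $f'$ give $g_i - g_p = \beta g_j + b_{ik} g_k - b_{pq} g_q$ with $\beta := b_{ij} - b_{pj} \ge 0$; and subtracting these yields the key relation
\begin{equation}\label{eq:key}
(\alpha - \beta) g_j + (a_{ik} - b_{ik}) g_k = (a_{pq} - b_{pq}) g_q ,
\end{equation}
whereas the original identities give $f - f' = (a_{ij} - b_{ij}) g_j + (a_{ik} - b_{ik}) g_k = (a_{pj} - b_{pj}) g_j + (a_{pq} - b_{pq}) g_q$. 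I would also record that every coefficient occurring is at most the corresponding $\lambda$: indeed $a_{ij} g_j - g_i = f - a_{ik} g_k \notin \CS$, since otherwise $f \in \CS$, hence $a_{ij} \le \lambda_{ij}$; the same argument applies to $b_{ij}, a_{ik}, b_{ik}, a_{pj}, b_{pj}, a_{pq}, b_{pq}$, and shows in addition that various auxiliary ``twisted monomials'', such as $(b_{ik} - a_{ik}) g_k - g_i$, are gaps of $\CS$.

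The second step is a sign analysis of $a_{ij} - b_{ij}$, $a_{ik} - b_{ik}$, $a_{pj} - b_{pj}$, $a_{pq} - b_{pq}$. If $a_{ij} - b_{ij}$ and $a_{ik} - b_{ik}$ are both nonnegative or both nonpositive, then the first expression for $f - f'$ exhibits it, or its opposite, as a nonnegative combination of generators, and we are done; symmetrically, via the second expression, if $a_{pj} - b_{pj}$ and $a_{pq} - b_{pq}$ are both nonnegative or both nonpositive. Using the symmetry $f \leftrightarrow f'$, which preserves every hypothesis (in particular $a_{ij} \ge a_{pj}$ and $b_{ij} \ge b_{pj}$), we are reduced to the case $a_{ij} > b_{ij}$, $a_{ik} < b_{ik}$ together with either (I) $a_{pj} > b_{pj}$, $a_{pq} < b_{pq}$, or (II) $a_{pj} < b_{pj}$, $a_{pq} > b_{pq}$. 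Here $q \ne k$ is essential: it makes \eqref{eq:key} a genuine relation among the three \emph{distinct} minimal generators $g_j, g_k, g_q$, which after rearranging according to the sign of $\alpha - \beta$ becomes an equality $\mu g_m = \mu' g_n + \mu'' g_r$ with $\{m, n, r\} = \{j, k, q\}$, $\mu \ge 1$, and $\mu', \mu'' \ge 0$ not both zero; for instance, in case (II) it reads $(u + v) g_j = K g_k + Q g_q$, where $u := a_{ij} - b_{ij}$, $v := b_{pj} - a_{pj}$, $K := b_{ik} - a_{ik}$, $Q := a_{pq} - b_{pq}$ are all positive and $u + v = \alpha - \beta \le \alpha \le \lambda_{ij} - 1$.

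Ruling out cases (I) and (II) is the crux, and the main obstacle. Substituting \eqref{eq:key} back into the factorizations of $f$ and $f'$ merely reproduces the identities already obtained, so the contradiction must genuinely use the remaining structure: the bounds $a_\bullet, b_\bullet \le \lambda_\bullet$ and the resulting bound on $\mu$; the minimality of $g_1, \dots, g_5$, which already contradicts $\mu g_m = \mu' g_n + \mu'' g_r$ when $\mu = 1$; the fact that $f$ and $f'$ are \emph{bad}, so neither they nor $F(\CS) - f$, $F(\CS) - f'$ admits a one-term factorization; and the maximality of $f, f', F(\CS) - f, F(\CS) - f'$ in $\mathbb{Z} \setminus \CS$. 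Concretely, I would feed \eqref{eq:key} into one of the twisted monomials attached to $f$ or $f'$ — for example $a_{ij} g_j - g_i$ or $(b_{ik} - a_{ik}) g_k - g_i$ — so as to rewrite it as a visibly nonnegative combination of generators, contradicting that it is a gap of $\CS$. The precise bookkeeping depends on the positive-support digraph of an RF-matrix of $f$, a $2$-regular digraph on five vertices containing the arcs $i \to j$, $i \to k$, $p \to j$, $p \to q$; there are only finitely many such digraphs, so the argument reduces to a finite, if somewhat delicate, verification. Everything prior to this last step is routine.
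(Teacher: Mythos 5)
Your reduction to $\le_\CS$-comparability plus maximality of pseudo-Frobenius numbers is exactly the paper's endgame, and your sign analysis correctly isolates the hard case; but the proof is not complete. You explicitly leave the mixed-sign cases unresolved, deferring to ``a finite, if somewhat delicate, verification'' over positive-support digraphs that you do not carry out, and the ingredients you propose for the contradiction (the bounds $a_{\bullet}\le\lambda_{\bullet}$, minimality of the generators, the absence of one-term factorizations) are not where the contradiction actually lives. The decisive ingredient is property (b) established just before the lemma: every row of an RF-matrix of a \emph{bad} pseudo-Frobenius number has exactly two positive off-diagonal entries and two zeroes. The paper's move is a cross-substitution you do not consider: assuming without loss of generality $a_{pq}\ge b_{pq}$, the two factorizations of $f'$ give $b_{pq}g_q=(b_{ij}-b_{pj})g_j+b_{ik}g_k+g_p-g_i$, and plugging this into $f=a_{pj}g_j+a_{pq}g_q-g_p$ yields
$$f=(a_{pj}+b_{ij}-b_{pj})g_j+b_{ik}g_k+(a_{pq}-b_{pq})g_q-g_i,$$
which (for $q\ne i$; the case $q=i$ is disposed of because $f\notin\CS$) is the $i$-th row of an RF-matrix for $f$ with nonnegative entries in the three distinct positions $j,k,q$, the first two strictly positive since $a_{pj}+b_{ij}-b_{pj}\ge a_{pj}>0$ and $b_{ik}>0$. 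Property (b) then forces $a_{pq}=b_{pq}$, whence $f-f'=(a_{pj}-b_{pj})g_j$ and maximality gives $f=f'$. Your difference relation $(\alpha-\beta)g_j+(a_{ik}-b_{ik})g_k=(a_{pq}-b_{pq})g_q$ discards the $g_p-g_i$ term that makes this substitution land on an RF-row of $f$, which is precisely why substituting it back ``merely reproduces the identities already obtained''; without the RF-row interpretation and property (b), the remaining cases are not ruled out and the lemma is not proved.
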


\begin{proof}
	Assume without loss of generality that $a_{pq} \ge b_{pq}$. From equation (2) we obtain $$b_{pq}g_q=(b_{ij}-b_{pj})g_j+b_{ik}g_k+g_p-g_i.$$
	
	By substitution in $f$ we get
	\begin{alignat}{2} 
		f &=a_{pj}g_j+a_{pq}g_q-g_p \nonumber \\ &=a_{pj}g_j+(a_{pq}-b_{pq})g_q-g_p+(b_{ij}-b_{pj})g_j+b_{ik}g_k+g_p-g_i \nonumber \\ &=(a_{pj}+b_{ij}-b_{pj})g_j+b_{ik}g_k+(a_{pq}-b_{pq})g_q-g_i.
	\end{alignat}
	Since $j \neq k$, $j \neq q$ and $q \neq k$, and $a_{pj}+b_{ij}-b_{pj} \ge a_{pj} > 0$, $b_{ik} > 0$ and $a_{pq}-b_{pq} \ge 0$, equation (3) is associated to a row of a RF-matrix for $f$. Since $f \in PF_b(\CS)$, by property (b) this row must contain exactly $2$ positive entries and two zeroes, thus we must necessarily have $a_{pq}=b_{pq}$ (notice that if $q=i$ we still have $a_{pq}=b_{pq}$ because $f \not \in \CS$).
	
	Therefore $f-f'=(a_{pj}-b_{pj})g_j$, and thus we have either $f \le_S f'$ or $f' \le_S f$. In both cases, since $f, f' \in PF(\CS)$ are maximals, we conclude that $f=f'$.
\end{proof}

\begin{prop}\label{bad}
	Let $\CS$ be an almost-symmetric numerical semigroup with embedding dimension five, and let $N$ be the number of possible $0$-configurations satisfying the two properties (a) and (b). Then $|PF_b(\CS)| \le 2N$.
\end{prop}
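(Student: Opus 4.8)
The plan is to bound $|PF_b(\CS)|$ by sorting bad pseudo-Frobenius numbers according to the $0$-configuration common to all of their RF-matrices. By the discussion preceding Lemma~\ref{rows}, every $f \in PF_b(\CS)$ has such a well-defined $0$-configuration, which moreover satisfies properties (a) and (b), hence is one of the $N$ configurations in the statement. It therefore suffices to prove that a fixed $0$-configuration is shared by at most two elements of $PF_b(\CS)$: grouping $PF_b(\CS)$ by configuration then gives $|PF_b(\CS)| \le 2N$.

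Suppose, to the contrary, that three distinct elements $f_1, f_2, f_3 \in PF_b(\CS)$ have the same $0$-configuration $c$. The argument proving Lemma~\ref{rows} is purely combinatorial in $c$, so it singles out row indices $i \neq p$ and a column index $j$ such that in \emph{every} matrix with $0$-configuration $c$ the two positive entries of row $i$ lie in columns $j$ and some $k$, those of row $p$ lie in columns $j$ and some $q$, and $k \neq q$ (while automatically $i \notin \{j,k\}$, $p \notin \{j,q\}$, $j \neq k$, $j \neq q$). For each $m$ pick an RF-matrix $A^m = (a^m_{st})$ for $f_m$; reading off rows $i$ and $p$ yields
\begin{equation*}
	f_m = a^m_{ij}g_j + a^m_{ik}g_k - g_i = a^m_{pj}g_j + a^m_{pq}g_q - g_p,
\end{equation*}
with all four of the displayed coefficients positive. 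Sorting $m \in \{1,2,3\}$ by whether $a^m_{ij} \ge a^m_{pj}$ or $a^m_{ij} < a^m_{pj}$, the pigeonhole principle puts two of them — say $m = 1$ and $m = 2$ — in the same class. If both satisfy $a^m_{ij} \ge a^m_{pj}$, then the two identities above for $f_1$ and $f_2$ are exactly the hypotheses of Lemma~\ref{key} (with the coefficients of $A^1$ and $A^2$ playing the roles of the $a_{st}$ and $b_{st}$), so $f_1 = f_2$. If both satisfy $a^m_{ij} < a^m_{pj}$, one applies Lemma~\ref{key} after interchanging the two rows — i.e. swapping $i \leftrightarrow p$ and $k \leftrightarrow q$, keeping $j$ fixed — which turns the strict inequality into the one demanded by the lemma, and again $f_1 = f_2$. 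Either way this contradicts the distinctness of $f_1, f_2, f_3$, so no $0$-configuration is shared by three bad pseudo-Frobenius numbers, and the bound follows.

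The point requiring care is that the data supplied by Lemma~\ref{rows} exactly matches the rigid index hypotheses of Lemma~\ref{key}: the distinctness of the indices on each side of the equations and the condition $q \neq k$ are precisely the content of ``rows $i$ and $p$ have a single common positive column,'' while possible coincidences such as $q = i$ or $k = p$ are already permitted by Lemma~\ref{key}. The only extra bookkeeping is the relabeling in the second pigeonhole class, which is symmetric and routine; I do not expect any obstacle beyond this.
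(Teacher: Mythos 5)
Your proposal is correct and follows essentially the same route as the paper's proof: pigeonhole three bad pseudo-Frobenius numbers into a common $0$-configuration, extract the two rows via Lemma~\ref{rows}, pigeonhole again on the ordering of the $j$-th entries, and invoke Lemma~\ref{key}. The only cosmetic difference is that you phrase it as a direct bound per configuration rather than assuming $|PF_b(\CS)| \ge 2N+1$, and you spell out the row-swap for the second ordering class, which the paper compresses into ``up to a change of indices.''
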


\begin{proof}
	We argue by contradiction: assume that $|PF_b(\CS)| \ge 2N + 1$. By definition of $N$, there must be three distinct pseudo-Frobenius numbers $f_1,f_2,f_3 \in PF_b(\CS)$ which RF-matrices have the same $0$-configuration. Let $A$ be a RF-matrix for $f_1$; by Lemma \ref{rows} there exist two rows $R_1,R_2$ of $A$ such that for exactly one index $j$ the $j$-th entries of $R_1$ and $R_2$ are both positive. Since the RF-matrices of $f_2$ and $f_3$ have the same $0$-configuration, we obtain
	\begin{alignat}{2}
		f_1 &= a_{ij}g_j+a_{ik}g_k-g_i &= a_{pj}g_j+a_{pq}g_q-g_p \nonumber\\
		f_2 &= b_{ij}g_j+b_{ik}g_k-g_i &= b_{pj}g_j+b_{pq}g_q-g_p \nonumber\\
		f_3 &= c_{ij}g_j+c_{ik}g_k-g_i &= c_{pj}g_j+c_{pq}g_q-g_p \nonumber
	\end{alignat}
	and since the expressions correspond to positive entries of $R_1$ and $R_2$ we deduce that all indices appearing on the same side of these equations are distinct, while the hypothesis that $R_1$ and $R_2$ have exactly one positive entry in the same index $j$ means that $q \neq k$. 
	
	Consider now the three couples of elements $\{a_{ij},a_{pj}\}$, $\{b_{ij},b_{pj}\}$, $\{c_{ij},c_{pj}\}$. Obviously, at least two of these couples must be ordered in the same way, therefore (up to a change of indices) we obtain that $a_{ij} \ge a_{pj}$ and $b_{ij} \ge b_{pj}$. Hence $f_1,f_2$ fall under the hypotheses of Lemma \ref{key}, implying $f_1=f_2$, yielding a contradiction.
\end{proof} 

Combining Propositions \ref{good} and \ref{bad} we obtain $t(\CS) \le 2N+41$, and since $N$ is bounded (for instance it is easy to check that $N < 6^5$), we prove the following Theorem, which is equivalent to Theorem \ref{main}. 
\begin{thm}\label{type}
	The type of an almost symmetric numerical semigroup $\CS$ with embedding dimension five is bounded.
\end{thm}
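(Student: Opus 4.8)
The plan is to assemble the bound from the two cases that have already been isolated. Using the exact decomposition $t(\CS) = |PF_g(\CS)| + |PF_b(\CS)| + 1$ of Remark~\ref{typerem}, I would bound the two summands separately: Proposition~\ref{good} gives $|PF_g(\CS)| \le 40$ directly, and Proposition~\ref{bad} gives $|PF_b(\CS)| \le 2N$, where $N$ is the number of $0$-configurations of a $5 \times 5$ matrix that are compatible with properties (a) and (b). Together these yield $t(\CS) \le 2N + 41$, so the entire theorem reduces to the assertion that $N$ is a fixed finite number, not depending on $\CS$.

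That last assertion is elementary. A $0$-configuration is nothing more than a choice of which of the $25$ matrix entries vanish, so a priori there are at most $2^{25}$ of them; property (b) cuts this down sharply, since the diagonal entries are forced to be nonzero and each of the five rows must contain exactly two zeros among its four off-diagonal positions, leaving at most $\binom{4}{2}^5 = 6^5$ candidate zero-patterns for $A$, after which property (a) determines the zero-pattern of $B$. Hence $N < 6^5$ and $t(\CS) \le 2 \cdot 6^5 + 41$, an absolute constant. Since the Cohen-Macaulay type of an almost Gorenstein monomial curve in $\A^5$ equals the type of its associated numerical semigroup, which is almost symmetric of embedding dimension five, this is equivalent to Theorem~\ref{main}.

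I do not expect any real difficulty in this closing step; all of the substance lives in the preparatory results. Proposition~\ref{good} rests on Remark~\ref{lambda} together with the involution $f \mapsto F(\CS)-f$ on $PF_g(\CS)$, while Proposition~\ref{bad} combines the rigidity of properties (a) and (b) --- themselves forced by Proposition~\ref{prop1} and the parity observation that a row with three zeros would produce a good pseudo-Frobenius number --- with Lemma~\ref{rows} and, above all, the substitution identity of Lemma~\ref{key}. The one place where extra work is genuinely useful is the evaluation of $N$: to replace the crude bound $2 \cdot 6^5 + 41$ by the sharper value $473$ mentioned in the introduction, one would need to enumerate the $0$-configurations satisfying (a) and (b) --- preferably up to the natural action of the symmetric group on the five indices --- and pair this with a closer analysis of how many bad pseudo-Frobenius numbers a single $0$-configuration can support. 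That refinement is the only computationally delicate point, and it is not needed for the boundedness statement itself.
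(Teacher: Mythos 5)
Your proposal is correct and follows essentially the same route as the paper: the paper likewise combines Proposition~\ref{good} and Proposition~\ref{bad} via Remark~\ref{typerem} to get $t(\CS)\le 2N+41$ and then notes $N<6^5$ (later sharpened to $N\le 216$ by computation in Remark~\ref{bound}). Your counting of zero-patterns row by row, with property (a) determining the pattern of $B$ from that of $A$, matches the paper's justification that $N$ is an absolute constant.
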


\begin{remark}\label{bound}
	We computed with GAP all $0$-configurations of square matrices $A=(a_{ij})$ of order $5$ satisfying property (b) - i.e. each row and column of $A$ contains exactly two zeroes and two positive entries, and $a_{ii}=-1$. We obtained that there are exactly $216$ such configurations, thus $N \le 216$. In particular, this yields $t(\CS) \le 2 \cdot 216 + 41 = 473$.
\end{remark}

The next remark suggests that the bound obtained in Remark \ref{bound} can be largely improved, though it would require new techniques. 

\begin{remark}\label{comp}
	An extensive computation performed by P.A. Garc\'ia-S\'anchez, using GAP and the package Numericalsgps (see \cite{DGM}), found that, if $\CS=\langle g_1,\ldots,g_5 \rangle$ is an almost symmetric numerical semigroup and $g_1<\ldots<g_5 \le 20 0$ (this family consists of more than $7 \cdot 10^6$ examples!), then $t(\CS) \le 5$. Moreover, there is always at most one bad pseudo-Frobenius number, which is exactly $\frac{F(\CS)}{2}$. If this finding turns out to be true for all almost symmetric numerical semigroups $\CS$ with embedding dimension $5$, the bound could be vastly improved.
\end{remark}

The first key point of our argument is that RF-matrices associated to \emph{bad} pseudo-Frobenius numbers must have some very special properties. In the higher dimension case, these matrices are hard to control. To see this, let $\CS$ be an almost symmetric numerical semigroup with embedding dimension $6$, $f, F(\CS)-f$ be bad pseudo-Frobenius numbers and $A,B$ be RF-matrices for $f,F(\CS)-f$ respectively. Then Proposition \ref{prop1} infers that there are at least $30$ entries equal to zero on $A \cup B$; however, contrary to the embedding dimension $5$ case, it could be possible to have more than $30$ without having too many zeroes on each row (a pseudo-Frobenius number is good if at least one row contains $e-2$ zeroes). Moreover, the key idea behind Proposition \ref{bad} is that, under our hypotheses, there cannot be more than two bad pseudo-Frobenius numbers sharing the same $0$-configuration. This might not be true in the higher dimension case.

\begin{example}
	Consider the numerical semigroup $\CS=\langle 455,497,574,589,631,708 \rangle$. We have $$PF(\CS)=\{ 3079, 3289, 3521, 3655, 3674, 3789, 3923, 4057, 4172, 4191, 4325, 4557, 4767, 7846 
	\},$$ thus $\CS$ has embedding dimension $e=6$ and type $t(\CS) = 14 > 2e$. Further, $PF(\CS)$ contains the arithmetic progression $\{3521,3655,3789,3923,4057,4191,4325\}$ of ratio $134$. Notice that, if $A_f$ is a RF-matrix associated to $f \in PF(\CS)$, we have
	$$A_{3521} = \begin{pmatrix}
		-1 & 8 & 0 & 0 & 0 & 0 \\
		0 & -1 & 7 & 0 & 0 & 0  \\
		9 & 0 & -1 & 0 & 0 & 0 \\
		0 & 7 & 0 & -1 & 1 & 0 \\
		0 & 0 & 6 & 0 & -1 & 1 \\
		8 & 0 & 0 & 1 & 0 & -1
	\end{pmatrix},$$ $$A_{3521+d\lambda} = \begin{pmatrix}
		-1 & 8-\lambda & 0 & 0 & \lambda & 0 \\
		0 & -1 & 7-\lambda & 0 & 0 & \lambda  \\
		9-\lambda & 0 & -1 & \lambda & 0 & 0 \\
		0 & 7-\lambda & 0 & -1 & \lambda+1 & 0 \\
		0 & 0 & 6-\lambda & 0 & -1 & \lambda+1 \\
		8-\lambda & 0 & 0 & \lambda+1 & 0 & -1
	\end{pmatrix}
	$$
	
	for $\lambda=0,1,\ldots,6$.
	
	The $5$ pseudo-Frobenius numbers $\{3521,3655,3789,3923,4057,4191\}$ (obtained for $\lambda=1,\ldots,5$) all have RF-matrices sharing the same $0$-configuration.
	
\end{example}

\section*{Acknowledgements}
I would like to thank Marco D'Anna, Alessio Sammartano and Francesco Strazzanti for many helpful feedbacks and discussions on this work. Moreover, I would like to thank P.A. Garc\'ia-S\'anchez for his help with the computations in Remark \ref{comp}. Finally, I would like to thank the referees for their suggestions.

The author was funded by the project “Propriet\`a locali e globali
di anelli e di variet\`a algebriche”-PIACERI 2020–22, Universit\`a degli Studi
di Catania.

\end{document}